\documentclass[12pt,draftcls,journal,onecolumn]{IEEEtran}

\usepackage{amssymb,amsthm, amsmath,latexsym}
\usepackage{graphicx}
\usepackage{mathrsfs}
\usepackage{amsfonts}
\usepackage{amssymb}
\usepackage{longtable}
\usepackage{amsmath}
\usepackage{setspace}
\usepackage{caption}
\usepackage[figuresright]{rotating}
\usepackage[misc]{ifsym}
\usepackage{bbm}
\usepackage{makecell}
\usepackage{arydshln}
\usepackage{supertabular}
\usepackage{booktabs}
\usepackage{color}

\newtheorem{theorem}{Theorem}
\newtheorem{lemma}[theorem]{Lemma}
\newtheorem{remark}[theorem]{Remark}
\newtheorem{proposition}[theorem]{Proposition}
\newtheorem{corollary}[theorem]{Corollary}

\newtheorem{example}[theorem]{Example}

\usepackage{blindtext}

\ifCLASSINFOpdf

\else

\fi

\begin{document}
\begin{center}
{\bf\large  Skew-product groups of finite abelian $p$-groups}
\end{center}

\begin{center}
{\small Jiawei He  \footnotemark}\\
\medskip
 {\small
  School of Mathematics and Information Science,\\ Nanchang Hangkong University, \\Nanchang, P.R.China
}
\end{center}

\footnotetext{Corresponding author: hjwywh@mails.ccnu.edu.cn (J. He).}

\renewcommand{\thefootnote}{\empty}
\footnotetext{{\bf Keywords} permutation group, skew-morphism, skew-product, Cayley map }
\footnotetext{{\bf MSC(2010)} 20F19,  20B20, 05E18, 05E45.}

\begin{abstract}
A skew-morphism of a finite group $G$ is a permutation $\sigma$ of $G$ fixing the identity element such that there exists a function $\pi\colon G\to\mathbb{Z}$ satisfying $\sigma(xy)=\sigma(x)\sigma^{\pi(x)}(y)$ for all $x,y\in G$. For a given skew-morphism $\sigma$ of $G$, the product of the left regular representation of $G$ and the cyclic group $\langle\sigma\rangle$ forms a permutation group on $G$, called a skew-product group of $G$. Skew-product groups of finite elementary abelian $p$-groups were investigated in \cite{DLYZ}. In the present paper, we extend this study to all finite abelian $p$-groups.
\end{abstract}

\section{Introduction}
Throughout the paper all groups are assumed to be finite. A \emph{skew-morphism} of a group \(G\) is a permutation \(\sigma\) of \(G\) fixing the identity element, together with a function \(\pi\colon G\to \mathbb{Z}\) such that
\(
\sigma(gh) = \sigma(g)\sigma^{\pi(g)}(h) \quad\text{for all } g,h\in G.
\)
If \(\pi(g)=1\) for every \(g\in G\), then \(\sigma\) is an automorphism of \(G\); thus skew-morphisms naturally generalize group automorphisms.
Skew-morphisms are intimately connected with group factorizations. Let \(L_G = \{L_g \mid g\in G\}\) be the left regular representation of \(G\). For all \(g,h\in G\),
\(
(\sigma L_g)(h) = \sigma(g)\sigma^{\pi(g)}(h) = (L_{\sigma(g)}\sigma^{\pi(g)})(h),
\)
hence \(\langle\sigma\rangle L_G \subseteq L_G\langle\sigma\rangle\). Equality follows by comparing orders, so \(X := L_G\langle\sigma\rangle\) is a permutation group on \(G\), called the \emph{skew-product} of \(L_G\) by \(\sigma\). In \(X\), the cyclic subgroup \(\langle\sigma\rangle\) is a point stabilizer of the transitive group \(X\) and is therefore core-free. Conversely, every group factorization \(X = GY\) with \(Y = \langle y\rangle\) cyclic, \(G\cap Y = 1\) and \(Y\) core-free in \(X\) determines a skew-morphism of \(G\).

\vskip 3mm
Skew-morphisms were originally introduced to study regular \emph{Cayley maps}. A Cayley map \(\mathcal{M} = \operatorname{Cay}(G,S,P)\) is an embedding of a Cayley graph of \(G\) into an orientable closed surface, where the local rotation at each vertex is given by a cyclic permutation \(P\) of the generating set \(S\). The map is \emph{regular} if its automorphism group acts regularly on the directed edges. It was proved in~\cite{JS} that \(\mathcal{M}\) is regular precisely when \(P\) extends to a skew-morphism of \(G\) that has a generating orbit closed under inverses. Hence classifying regular Cayley maps for a group \(G\) is equivalent to classifying such skew-morphisms.

\vskip 3mm
Determining skew-morphisms for specific families of groups is a fundamental but challenging problem. For cyclic groups, skew-morphisms related to regular Cayley maps were extracted in~\cite{CT,YSK}; coset-preserving skew-morphisms of cyclic groups were fully described in~\cite{BJ1,BJ2}. For abelian groups, the skew-product groups of finite elementary abelian \(p\)-groups were studied in~\cite{DLYZ}. In the present paper we extend this line of research to all finite abelian \(p\)-groups.
Our first main theorem establishes a uniform normal Sylow structure for arbitrary skew-product groups over abelian \(p\)-groups.

\begin{theorem}\label{thm:A}
Let \(G\) be a finite abelian \(p\)-group and let \(X=G\langle\sigma\rangle\) be the associated skew-product group. Write \(|\sigma|=kp^m\) with \((k,p)=1\). Then:
\begin{enumerate}
\item \(X'\) is an abelian \(p\)-group;
\item \(P=G\langle\sigma^k\rangle\) is the unique Sylow \(p\)-subgroup of \(X\);
\item \(X=P\rtimes\langle\sigma^{p^m}\rangle\), where \(\langle\sigma^{p^m}\rangle\) acts faithfully on both \(P\) and its Frattini quotient \(P/\Phi(P)\).
\end{enumerate}
\end{theorem}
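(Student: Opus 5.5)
The plan is to exploit the factorization $X=GY$ with $Y=\langle\sigma\rangle$, where both factors are abelian and $Y$ is core-free. The order bookkeeping is $|X|=|G|\,|\sigma|=|G|kp^m$, since $G\cap Y=1$ ($Y$ is the stabilizer of the identity and $G$ acts regularly). So the $p'$-part of $|X|$ is $k$, and $|G\langle\sigma^k\rangle|\le |G|p^m$.

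\textbf{Step 1 (solvability and $O_{p'}(X)=1$).} By It\^o's theorem, a product of two abelian subgroups is metabelian. Hence $X'$ is abelian and $X$ is solvable. Since $X$ is solvable, Hall's theorem gives Hall $p'$-subgroups of order $k$, all conjugate. The subgroup $\langle\sigma^{p^m}\rangle\le Y$ has order $k$, so it is one of them. A normal $p'$-subgroup of a solvable group lies in every Hall $p'$-subgroup, so $O_{p'}(X)\le\langle\sigma^{p^m}\rangle\le Y$. It is therefore a normal subgroup of $X$ contained in $Y$, and core-freeness of $Y$ forces $O_{p'}(X)=1$.

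\textbf{Step 2 (parts 1 and 2).} Since $X'$ is abelian, $X'=(X')_p\times(X')_{p'}$. The factor $(X')_{p'}$ is characteristic in $X'\trianglelefteq X$, so it is a normal $p'$-subgroup of $X$ and is trivial by Step 1. Thus $X'$ is an abelian $p$-group, which is part 1.

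Next, $X/X'$ is abelian, so it has a unique (normal) Sylow $p$-subgroup. Moreover $X'$, being a normal $p$-subgroup, is contained in every Sylow $p$-subgroup. Hence every Sylow $p$-subgroup is the full preimage of the Sylow $p$-subgroup of $X/X'$, so the Sylow $p$-subgroup is unique and normal. To identify it, note that $G\langle\sigma^k\rangle$ is a subset of the normal Sylow $p$-subgroup, because $G$ and $\sigma^k$ are $p$-elements. Its size is $|G|\,p^m$ since $G\cap\langle\sigma^k\rangle=1$, and this equals the full $p$-part of $|X|$. Therefore $P=G\langle\sigma^k\rangle$ is that Sylow subgroup, which is part 2.

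\textbf{Step 3 (part 3).} Put $H=\langle\sigma^{p^m}\rangle$, which has order $k$ coprime to $p$. Then $P\cap H=1$ and $|P||H|=|X|$, so $X=P\rtimes H$. For faithfulness on $P$, let $K$ be the kernel of the action of $H$ on $P$. Then $K$ centralizes $P$, and it centralizes $H$ because $H$ is abelian. So $K$ is central in $X=PH$, hence normal in $X$ and contained in $Y$, and core-freeness gives $K=1$. For faithfulness on $P/\Phi(P)$, apply Burnside's theorem on coprime action: a $p'$-group of automorphisms of a $p$-group that acts trivially on $P/\Phi(P)$ acts trivially on $P$. Combined with faithfulness on $P$, this yields faithfulness on $P/\Phi(P)$.

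The main obstacle is the input from It\^o's theorem, which is what delivers both solvability and the abelian $X'$. After that, the key point is the observation that the Hall $p'$-subgroups are conjugates of $\langle\sigma^{p^m}\rangle\le Y$, which is what lets core-freeness kill $O_{p'}(X)$. If It\^o's theorem were unavailable as a citation, I would instead invoke the Huppert/Douglas results on products of cyclic and abelian groups to obtain solvability, and then argue $X'$ abelian directly.
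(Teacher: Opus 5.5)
Your proof is correct and follows the paper's argument essentially step for step. The shared steps are: It\^o's theorem to get $X'$ abelian; trapping the $p'$-part of $X'$ inside the Hall $p'$-subgroup $\langle\sigma^{p^m}\rangle\le\langle\sigma\rangle$ and killing it by core-freeness; pulling back the Sylow $p$-subgroup of $X/X'$; identifying $P$ by an order count; and Burnside's theorem for faithfulness on the Frattini quotient. The differences are cosmetic. The paper gets $X'_{p'}\le\langle\sigma^{p^m}\rangle$ by a direct order argument, so neither solvability nor Hall's theorem is needed. It proves faithfulness on $P$ via $Z(X)\le G$ (Lemma~\ref{lem:centralizer-regular}), where you apply core-freeness of $\langle\sigma\rangle$ directly to the central kernel, which works equally well.
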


An immediate consequence is that every skew-morphism of an abelian \(p\)-group whose order is coprime to \(p\) is an automorphism. This reduces the general theory to the study of skew-product \(p\)-groups.
Our second main theorem concerns the structure of skew-product \(p\)-groups and their core quotients. Let \(N=\operatorname{Core}_X(G)\) denote the largest normal subgroup of \(X\) contained in \(G\).

\begin{theorem}\label{thm:B}
Let \(X=G\langle\sigma\rangle\) be a skew-product \(p\)-group. Then:
\begin{enumerate}
\item \(Z(X)\le N\); in particular, \(N\ne 1\) whenever \(X\ne 1\);
\item if \(X\) has nilpotency class at most \(2\), then \(G\trianglelefteq X\) and \(\sigma\) is an automorphism;
\item if \(G\not\trianglelefteq X\), then \(X/N\) is a nonabelian metabelian \(p\)-group with nontrivial cyclic center, contained in the image of \(\langle\sigma\rangle\).
\end{enumerate}
\end{theorem}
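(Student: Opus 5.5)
Throughout, $X=G\langle\sigma\rangle$ is a $p$-group with $G$ abelian and $\langle\sigma\rangle$ core-free, and we write $N=\operatorname{Core}_X(G)=\bigcap_{x\in X}G^x$.

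\textbf{Part (1).} We show that $Z(X)\le G$; since $Z(X)$ is normal, this gives $Z(X)\le N$. Any $z\in Z(X)$ can be written $z=g\sigma^i$ with $g\in G$. Then $\sigma^i=g^{-1}z$. For every $h\in G$, conjugation by $z$ is trivial and conjugation by $g^{-1}$ is trivial because $G$ is abelian. Hence $\sigma^i$ centralizes $G$. It also centralizes $\langle\sigma\rangle$, so $\sigma^i\in Z(X)$. Core-freeness of $\langle\sigma\rangle$ forces $\sigma^i=1$, so $z=g\in G$. A nontrivial $p$-group has nontrivial center, so $N\ne1$ whenever $X\ne1$.

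\textbf{Part (2).} Assume $X$ has class at most $2$, so $X'\le Z(X)\le N\le G$ by Part (1). Then $G\ge X'$, and $G$ is normal in $X$. Consequently $\sigma$ normalizes $L_G$ and fixes the identity, so $\sigma$ acts on $G$ by conjugation. That is, $\sigma$ is an automorphism, equivalently $\pi\equiv1$.

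\textbf{Part (3).} Assume $G\not\trianglelefteq X$, and put $\bar X=X/N=\bar G\langle\bar\sigma\rangle$. We establish the claimed properties in the following order.

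\emph{Step (a): the factorization descends.} $\bar G\cap\langle\bar\sigma\rangle=1$, because $G\cap\langle\sigma\rangle N=N(G\cap\langle\sigma\rangle)=N$.

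\emph{Step (b): $\bar G$ is core-free in $\bar X$.} If $M/N$ is normal in $\bar X$ with $M\le G$, then $M\le N$ by maximality of the core.

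\emph{Step (c): $\bar X$ is metabelian.} $\bar X$ is a product of two cyclic-by-abelian pieces, namely an abelian $\bar G$ and a cyclic $\langle\bar\sigma\rangle$. By Itô's theorem, a product of two abelian groups is metabelian, so $\bar X''=1$. The quotient $\bar X$ is nonabelian: otherwise $\bar G\trianglelefteq\bar X$, which by Step (b) gives $\bar G=1$, i.e.\ $G=N\trianglelefteq X$, a contradiction.

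\emph{Step (d): the center of $\bar X$.} $Z(\bar X)\ne1$ since $\bar X$ is a nontrivial $p$-group. Apply the argument of Part (1) with the roles of the factors swapped. Take $\bar z=\bar g\bar\sigma^i\in Z(\bar X)$. Then $\bar g=\bar z\bar\sigma^{-i}$ commutes with $\bar\sigma$ and with $\bar G$, since $\bar G$ is abelian. Hence $\bar g\in Z(\bar X)\cap\bar G$, a normal subgroup of $\bar X$ contained in $\bar G$. Core-freeness of $\bar G$ gives $\bar g=1$. Thus $Z(\bar X)\le\langle\bar\sigma\rangle$, which is cyclic, so $Z(\bar X)$ is cyclic and lies in the image of $\langle\sigma\rangle$.

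The only nontrivial input is Itô's theorem in Step (c); everything else follows the symmetric center-in-a-factor argument used in Part (1).
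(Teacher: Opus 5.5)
Your proof is correct. Part (2) and the metabelian and nonabelian claims in (3) match the paper: class $\le 2$ gives $X'\le Z(X)\le G$, so $G\trianglelefteq X$; Itô's theorem gives metabelian; and an abelian quotient would make the core-free $\bar G$ normal.

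The genuine difference is in how you locate the centers.

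\textbf{The paper's route.} It argues through permutation groups.
\begin{enumerate}
\item For (1), it uses Lemma~\ref{lem:centralizer-regular}: the centralizer in $\operatorname{Sym}(G)$ of the abelian regular subgroup $G$ is $G$ itself.
\item For (3), it sets up the faithful action of $Y=X/N$ on the coset space $Y/A$. It checks that $B$ acts regularly there, then applies the same centralizer fact to the abelian regular subgroup $B$.
\end{enumerate}

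\textbf{Your route.} You use a purely internal computation. Write a central element of a product $X=AB$ of two abelian subgroups as $z=ab$. Then $b=a^{-1}z$ and $a=zb^{-1}$ are each central, so $Z(X)=(Z(X)\cap A)(Z(X)\cap B)$, and core-freeness of one factor kills its part.

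\textbf{Comparison.} Your version is more elementary and self-contained. It makes the duality between (1) and (3) transparent, since it is literally the same computation with the factors swapped. It also avoids having to verify faithfulness and regularity of the coset action on $Y/A$.

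The trade-off is that you use commutativity of both factors each time. The permutation argument only needs the regular factor to be abelian: $G$ in (1), $B$ in (3). For instance, the paper's proof of $Z(X)\le G$ would survive with a nonabelian point stabilizer. In the present setting both factors are abelian, so nothing is lost.
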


Under the additional hypothesis that the image of \(\langle\sigma\rangle\) is normal in \(X/N\), we derive sharp bounds on the size of \(G/N\) and show that \(X/N\) is metacyclic for odd \(p\). We emphasize that this normality condition is not automatic. This demonstrates that regularity alone is insufficient to imply normality, and that the structure of core quotients of skew-product groups is more subtle than one might initially expect.

\vskip 3mm\vskip 3mm
The paper is organized as follows. Section \ref{sec:prelim} reviews the equivalence between skew-morphisms and exact factorizations and establishes basic properties. Section \ref{sec:sylow} proves Theorem \ref{thm:A} and its corollaries. Section \ref{sec:frattini} discusses the faithful action on the Frattini quotient. Section \ref{sec:p-groups} develops the basic theory of skew-product \(p\)-groups and proves Theorem \ref{thm:B}.

\section{Preliminaries: exact factorizations and skew-morphisms}\label{sec:prelim}

Throughout this paper, we identify \(G\) with its left regular image \(L_G\) inside \(\operatorname{Sym}(G)\), so we write \(X=G\langle\sigma\rangle\) for the skew-product group.

\begin{proposition}\label{prop:normal-auto}
In a skew-product group \(X=G\langle\sigma\rangle\), the following are equivalent:
\begin{enumerate}
\item \(G\trianglelefteq X\);
\item \(\sigma\) is a group automorphism of \(G\).
\end{enumerate}
\end{proposition}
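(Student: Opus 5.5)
We prove the two implications separately, working throughout with permutations of the set \(G\) and the relation \(\sigma L_g = L_{\sigma(g)}\sigma^{\pi(g)}\) from the introduction.

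\emph{(2)\(\Rightarrow\)(1).} Suppose \(\sigma\) is an automorphism of \(G\), so that \(\pi\equiv 1\) may be taken. For every \(g,h\in G\) we have \(\sigma L_g\sigma^{-1}(h)=\sigma(g\,\sigma^{-1}(h))=\sigma(g)h\). Hence \(\sigma L_g\sigma^{-1}=L_{\sigma(g)}\in L_G\). Since \(X\) is generated by \(L_G\) and \(\sigma\), this shows that \(L_G\) is normal in \(X\).

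\emph{(1)\(\Rightarrow\)(2).} Suppose \(L_G\trianglelefteq X\). Fix \(g\in G\). By normality, \(\sigma L_g\sigma^{-1}=L_{g'}\) for some \(g'\in G\). To identify \(g'\), evaluate both sides at the identity \(1\), using \(\sigma^{-1}(1)=1\). This gives \(\sigma(g)=g'\), so \(\sigma L_g\sigma^{-1}=L_{\sigma(g)}\). Now evaluate at \(\sigma(h)\) for an arbitrary \(h\in G\):
\[
\sigma(gh)=\sigma L_g\sigma^{-1}(\sigma(h))=L_{\sigma(g)}(\sigma(h))=\sigma(g)\sigma(h).
\]
Thus \(\sigma\) is a bijective homomorphism, that is, an automorphism of \(G\). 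One could also argue through the identity \(\sigma L_g = L_{\sigma(g)}\sigma^{\pi(g)}\) together with \(G\cap\langle\sigma\rangle=1\), but the direct evaluation above avoids this.

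The main point needing care is the identification of \(g'\) with \(\sigma(g)\). This relies only on two facts: \(\sigma\) fixes the identity, and \(L_{g'}\) is determined by its value at \(1\). No other step presents an obstacle, so the whole argument is essentially routine once this identification is made.
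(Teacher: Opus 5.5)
Your proof is correct and follows the paper's argument. Both directions rest on the identity \(\sigma L_g\sigma^{-1}=L_{\sigma(g)}\), which you justify explicitly by evaluating at \(1\) (the paper does this only implicitly via \(\sigma(1)=1\)); the one cosmetic difference is that you then evaluate at \(\sigma(h)\) to get \(\sigma(gh)=\sigma(g)\sigma(h)\), whereas the paper uses multiplicativity of conjugation together with faithfulness of the left regular representation.
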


\begin{proof}
2)\(\Rightarrow\)1): If \(\sigma\) is an automorphism, then conjugation by \(\sigma\) sends \(L_g\) to \(L_{\sigma(g)}\):
\[
\sigma L_g\sigma^{-1}(h)=\sigma(g\sigma^{-1}(h))=\sigma(g)h=L_{\sigma(g)}(h).
\]
Thus \(\sigma\) normalizes \(G\), and since \(G\) generates itself, \(G\trianglelefteq X\).

1)\(\Rightarrow\)2): Suppose \(G\trianglelefteq X\). Then conjugation by \(\sigma\) restricts to an automorphism of \(G\). Since \(\sigma(1)=1\), we have
\[
L_{\sigma(gh)}=\sigma L_{gh}\sigma^{-1}
=(\sigma L_g\sigma^{-1})(\sigma L_h\sigma^{-1})
=L_{\sigma(g)}L_{\sigma(h)}
=L_{\sigma(g)\sigma(h)}.
\]
The left regular representation is faithful, so \(\sigma(gh)=\sigma(g)\sigma(h)\). Hence \(\sigma\) is a group automorphism.
\end{proof}

We shall use the following classical theorem of  \cite{Ito1955}.

\begin{theorem}\label{thm:Ito}
If a finite group can be written as a product of two abelian subgroups, then it is metabelian; that is, its commutator subgroup is abelian.
\end{theorem}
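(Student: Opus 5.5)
The plan is the classical commutator argument of It\^o. Let \(X=AB\) with \(A,B\) abelian. Use the convention \([x,y]=x^{-1}y^{-1}xy\) and \(x^{g}=g^{-1}xg\). I will show two things: that \(X'=[A,B]\), and that any two commutators \([a,b]\) and \([a',b']\) with \(a,a'\in A\), \(b,b'\in B\) commute. Together these give that \(X'\) is abelian.

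\emph{Step 1: \(X'=[A,B]\).} From the identities \([a,b]^{a'}=[aa',b][a',b]^{-1}\) and \([a,b]^{b'}=[a,b']^{-1}[a,bb']\), the subgroup \([A,B]\) is normalized by \(A\) and by \(B\), so \([A,B]\trianglelefteq X\). In \(X/[A,B]\) the images of \(A\) and \(B\) are abelian and commute elementwise, so the quotient is abelian. Hence \(X'\le [A,B]\le X'\).

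\emph{Step 2: commutators commute.} Fix \(a,a'\in A\) and \(b,b'\in B\). Since \(X=AB=BA\), write
\[
b^{a'}=a_1b_1 \quad\text{and}\quad a^{b'}=b_2a_2, \qquad a_i\in A,\ b_i\in B.
\]

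First compute \([a,b]^{a'b'}\). Since \(a\) commutes with \(a'\), we have \([a,b]^{a'}=[a,a_1b_1]\). Because \(a\) and \(a_1\) commute, this equals \([a,b_1]\). Conjugating by \(b'\) gives \([a^{b'},b_1]=[b_2a_2,b_1]\), and since \(b_1,b_2\) commute this equals \([a_2,b_1]\).

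Symmetrically, \([a,b]^{b'}=[b_2a_2,b]=[a_2,b]\). Conjugating by \(a'\) gives \([a_2,b^{a'}]=[a_2,a_1b_1]=[a_2,b_1]\).

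Therefore \([a,b]^{a'b'}=[a,b]^{b'a'}\). So \([a,b]\) centralizes \((b'a')(a'b')^{-1}=b'a'b'^{-1}a'^{-1}\), which is \([b'^{-1},a'^{-1}]\) and hence the inverse of \([a'^{-1},b'^{-1}]\). As \(a',b'\) range over \(A\) and \(B\), the elements \([a'^{-1},b'^{-1}]\) run over all generators \([a'',b'']\) of \([A,B]\). Thus every generator of \([A,B]\) centralizes every other, so \([A,B]\) is abelian.

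\emph{Main obstacle.} The crux is Step 2. It requires the right choice of factorizations (\(b^{a'}\in AB\) and \(a^{b'}\in BA\)) so that the two orders of conjugation collapse to the same element \([a_2,b_1]\). The only other care needed is applying the expansions \([x,yz]=[x,z][x,y]^z\) and \([xy,z]=[x,z]^y[y,z]\) consistently with the chosen convention.
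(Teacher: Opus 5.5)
Your proof is correct. The identities in Step 1 hold under your conventions, and the computation $[a,b]^{a'b'}=[a_2,b_1]=[a,b]^{b'a'}$ in Step 2 checks out. The paper gives no proof of its own here; it cites It\^o's 1955 theorem, and your argument is essentially It\^o's original commutator proof, so it fills in that citation along the standard lines.
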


Finally, we record a standard fact about centralizers of regular abelian permutation groups.

\begin{lemma}\label{lem:centralizer-regular}
Let \(X\le\operatorname{Sym}(\Omega)\) be a faithful permutation group containing an abelian regular subgroup \(G\). Then
\[
Z(X)\le G.
\]
Consequently, \(Z(X)\le\operatorname{Core}_X(G)\).
\end{lemma}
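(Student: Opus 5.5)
The plan is to prove the stronger statement $C_{\operatorname{Sym}(\Omega)}(G)\le G$ for an abelian regular subgroup $G$ of $\operatorname{Sym}(\Omega)$. Since $Z(X)$ centralizes $G$, this gives $Z(X)\le G$ immediately.

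Fix a point $\omega\in\Omega$. Let $c\in\operatorname{Sym}(\Omega)$ commute with every element of $G$. Because $G$ is regular, there is a unique $g\in G$ with $g(\omega)=c(\omega)$. Put $d=g^{-1}c$. Since $G$ is abelian, $g^{-1}$ also commutes with $G$, so $d$ centralizes $G$ and fixes $\omega$. For any $\alpha\in\Omega$, transitivity gives $h\in G$ with $\alpha=h(\omega)$. Then
\[
d(\alpha)=d h(\omega)=h d(\omega)=h(\omega)=\alpha .
\]
So $d=1$ and $c=g\in G$. This is the standard argument that the centralizer of a regular group is semiregular, combined with the fact that $G\le C(G)$ because $G$ is abelian. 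Nothing is really hard here; the only point needing care is that we need $g^{-1}$ itself to commute with $G$, and this is exactly where abelianness is used.

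For the consequence, note that $Z(X)$ is a normal subgroup of $X$ and lies in $G$. Since $\operatorname{Core}_X(G)$ is the largest normal subgroup of $X$ contained in $G$, we get $Z(X)\le\operatorname{Core}_X(G)$. Alternatively, $Z(X)=xZ(X)x^{-1}\le xGx^{-1}$ for every $x\in X$, so $Z(X)\le\bigcap_{x\in X}xGx^{-1}$.
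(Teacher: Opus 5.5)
Your proof is correct and follows the same route as the paper: both show $C_{\operatorname{Sym}(\Omega)}(G)\le G$, deduce $Z(X)\le G$, and then use $Z(X)\trianglelefteq X$ to conclude $Z(X)\le\operatorname{Core}_X(G)$. The only difference is that the paper cites the standard fact that the centralizer of a regular group is the right regular representation, which coincides with $G$ when $G$ is abelian, whereas you prove the needed inclusion directly via the semiregularity argument, so your version is self-contained.
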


\begin{proof}
It is a standard result in permutation group theory that the centralizer of a regular subgroup in the full symmetric group is the corresponding right regular representation. Since \(G\) is abelian, its left and right regular representations coincide. Therefore
\[
C_{\operatorname{Sym}(\Omega)}(G)=G.
\]
Every element of \(Z(X)\) centralizes \(G\), so \(Z(X)\le G\). Since \(Z(X)\trianglelefteq X\), it is contained in the largest normal subgroup of \(X\) inside \(G\), which is \(\operatorname{Core}_X(G)\).
\end{proof}

\section{Main results}

\subsection{The normal Sylow \(p\)-subgroup}\label{sec:sylow}

In this section we prove Theorem \ref{thm:A}, establishing that every skew-product group over an abelian \(p\)-group has a unique normal Sylow \(p\)-subgroup. Let \(X=GC\) be an exact factorization, where \(G\) is an abelian \(p\)-group and \(C\) is cyclic and core-free. Write \(|C|=kp^m\) with \((k,p)=1\), and let \(C_p\), \(C_{p'}\) denote the subgroups of \(C\) of orders \(p^m\) and \(k\), respectively. 
\begin{proof}
\textbf{Step 1: \(X'\) is abelian.} Since \(G\) and \(C\) are both abelian and \(X=GC\), Theorem \ref{thm:Ito} implies immediately that \(X'\) is abelian.

\textbf{Step 2: The \(p'\)-part of \(X'\) is trivial.} Let \(X'_{p'}\) be the subgroup of \(X'\) consisting of all elements of order coprime to \(p\). Since \(X'\) is abelian, \(X'_{p'}\) is characteristic in \(X'\). As \(X'\trianglelefteq X\), we have \(X'_{p'}\trianglelefteq X\).

Note that \(|X|=|G||C|\), and \(|G|\) is a power of \(p\), so the \(p'\)-part of \(|X|\) equals \(k=|C_{p'}|\). Thus \(C_{p'}\) is a Hall \(p'\)-subgroup of \(X\). Since \(X'_{p'}\trianglelefteq X\), the product \(X'_{p'}C_{p'}\) is a subgroup of \(X\). It is a \(p'\)-group, being a product of two normal \(p'\)-subgroups. By maximality of Hall \(p'\)-subgroups,
\[
X'_{p'}C_{p'}=C_{p'},
\]
which implies \(X'_{p'}\le C_{p'}\le C\). But \(X'_{p'}\trianglelefteq X\) and \(C\) is core-free, so \(X'_{p'}=1\). Therefore \(X'\) is a \(p\)-group. This proves (i).

\textbf{Step 3: Existence of a normal Sylow \(p\)-subgroup.} The quotient \(X/X'\) is abelian, so it has a unique Sylow \(p\)-subgroup \(Q\). Let \(P\) be the preimage of \(Q\) in \(X\). Then
\[
|P|=|X'|\cdot |Q|
\]
is a power of \(p\), since both \(X'\) and \(Q\) are \(p\)-groups. Moreover,
\[
|X:P|=|X/X':Q|
\]
is coprime to \(p\). Hence \(P\) is a Sylow \(p\)-subgroup of \(X\). Since \(Q\trianglelefteq X/X'\), we have \(P\trianglelefteq X\). By Sylow's theorems, \(P\) is the unique Sylow \(p\)-subgroup. This proves (ii).

\textbf{Step 4: Identification of \(P\).} Since \(P\) is the unique Sylow \(p\)-subgroup, it contains every \(p\)-subgroup of \(X\). In particular, \(G\subseteq P\) and \(C_p\subseteq P\), so
\[
GC_p\subseteq P.
\]
On the other hand, since \(G\cap C_p=1\), we have
\[
|GC_p|=|G|\cdot |C_p|=|G|\cdot p^m=|X|_p=|P|.
\]
Therefore \(GC_p=P\). In particular, the set product \(GC_p\) is a subgroup. 

\textbf{Step 5: Semidirect product decomposition.} Since \(P\trianglelefteq X\), \(P\cap C_{p'}=1\) (a \(p\)-group and a \(p'\)-group intersect trivially), and
\[
|P|\cdot |C_{p'}|=|X|_p\cdot |X|_{p'}=|X|,
\]
we conclude that \(X=P\rtimes C_{p'}\). This proves (iii).
\end{proof}

\begin{corollary}\label{cor:coprime-auto}
If the order of a skew-morphism of an abelian \(p\)-group is coprime to \(p\), then it is an automorphism.
\end{corollary}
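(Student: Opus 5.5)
The plan is to apply Theorem \ref{thm:A} with \(m=0\). Then \(C\) is a Hall \(p'\)-subgroup of \(X\), and the aim is to show \(G\trianglelefteq X\). Once that is known, Proposition \ref{prop:normal-auto} gives at once that \(\sigma\) is an automorphism.

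Let \(\sigma\) be a skew-morphism of the abelian \(p\)-group \(G\) with \(|\sigma|=k\) coprime to \(p\), and let \(X=G\langle\sigma\rangle\) be the skew-product group. In the notation of Theorem \ref{thm:A}, \(|\sigma|=kp^m\) with \(m=0\). Hence \(C_p=\langle\sigma^k\rangle=1\), and part (ii) of Theorem \ref{thm:A} says that \(P=G\langle\sigma^k\rangle=G\) is the unique Sylow \(p\)-subgroup of \(X\).

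Step 4 of the proof of Theorem \ref{thm:A} also shows this directly. Every \(p\)-subgroup lies in \(P\), and \(|P|=|X|_p=|G|\) because \(|X|=|G|\cdot k\). Since \(G\le P\) and the orders agree, \(P=G\). A unique Sylow subgroup is normal, so \(G\trianglelefteq X\).

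Proposition \ref{prop:normal-auto} then yields that \(\sigma\) is a group automorphism of \(G\), which proves the corollary.

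There is no real obstacle here. The only point to check is that the identification \(P=G\langle\sigma^k\rangle\) collapses to \(P=G\) when \(m=0\). This follows either because \(\sigma^k=1\), or directly by comparing orders as above.
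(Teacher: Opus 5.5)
Your proof is correct and follows essentially the same route as the paper: with \(m=0\) you get \(C_p=1\), so Theorem~\ref{thm:A} gives \(G=P\trianglelefteq X\), and Proposition~\ref{prop:normal-auto} then shows that \(\sigma\) is an automorphism. Your extra order comparison confirming \(P=G\) is redundant but harmless.
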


\begin{proof}
If \((|\sigma|,p)=1\), then \(C_p=1\), so \(G=P\trianglelefteq X\). By Proposition \ref{prop:normal-auto}, \(\sigma\) is an automorphism.
\end{proof}

\begin{corollary}\label{cor:reduction}
Let \(\sigma\) be a skew-morphism of \(G\) with \(|\sigma|=kp^m\), \((k,p)=1\). Then \(\sigma^k\) is also a skew-morphism of \(G\), and its associated skew-product group is \(P=G\langle\sigma^k\rangle\).
\end{corollary}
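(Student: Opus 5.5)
We have to show two things: that \(\sigma^k\) is a skew-morphism of \(G\), and that its skew-product group is \(P\). Both will follow from the converse direction of the correspondence recalled in the Introduction. There, every exact factorization \(Y=GH\) with \(H=\langle y\rangle\) cyclic, \(G\cap H=1\) and \(H\) core-free in \(Y\) determines a skew-morphism of \(G\). So the plan is to exhibit such a factorization inside \(P\) with cyclic factor \(\langle\sigma^k\rangle\), and then check that the skew-morphism it produces is \(\sigma^k\) itself.

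\textbf{Step 1: the factorization.} Note that \(\langle\sigma^k\rangle=C_p\) is the subgroup of \(C=\langle\sigma\rangle\) of order \(p^m\). Step 4 of the proof of Theorem \ref{thm:A} shows that \(P=GC_p\) is a subgroup. We also have \(G\cap C_p\le G\cap C=1\), so \(P=G\langle\sigma^k\rangle\) is an exact factorization with a cyclic complement.

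\textbf{Step 2: the complement is core-free.} This is the only point requiring thought, since core-freeness in \(P\) is not inherited formally from core-freeness in \(X\). I will use the permutation action instead of a group-theoretic argument. The group \(P\) is a subgroup of \(\operatorname{Sym}(G)\) containing the regular group \(G\), so \(P\) is transitive. The stabilizer of the identity in \(P\) has order \(|P|/|G|=p^m\) and contains \(\sigma^k\), which fixes \(1\). Hence this stabilizer equals \(\langle\sigma^k\rangle\). A point stabilizer of a faithful transitive group is core-free, because its core fixes every point. Therefore \(\langle\sigma^k\rangle\) is core-free in \(P\).

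\textbf{Step 3: the skew-morphism is \(\sigma^k\).} Take any \(g\in G\). Since \(P=G\langle\sigma^k\rangle=\langle\sigma^k\rangle G\), we can write
\[\sigma^kL_g=L_{g'}\sigma^{kj}\]
for some \(g'\in G\) and integer \(j\). Evaluating both sides at \(1\) gives \(g'=\sigma^k(g)\). Evaluating both sides at an arbitrary \(h\in G\) then gives
\[\sigma^k(gh)=\sigma^k(g)\,(\sigma^k)^{j}(h).\]
Setting \(\pi'(g)=j\) shows that \(\sigma^k\) is a skew-morphism of \(G\); it fixes \(1\) and is a permutation. Its skew-product group is \(G\langle\sigma^k\rangle=P\) by Step 1.

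Alternatively, Step 3 can be done directly from the skew-morphism identity. Iterating \(\sigma(xy)=\sigma(x)\sigma^{\pi(x)}(y)\) gives \(\sigma^k(gh)=\sigma^k(g)\sigma^{s}(h)\), with \(s=\sum_{i=0}^{k-1}\pi(\sigma^i(g))\). One would then need to show that \(s\equiv 0\pmod k\) (mod \(|\sigma|\) up to adjustment), which is messier. The factorization argument above avoids this by using the fact that \(P\) is a group, i.e. Theorem \ref{thm:A}.
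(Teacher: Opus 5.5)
Your proof is correct and follows the paper's argument. Both rest on Step 4 of Theorem~\ref{thm:A}: \(P=G\langle\sigma^k\rangle\) is a subgroup of \(\operatorname{Sym}(G)\) containing the regular group \(G\), with point stabilizer \(\langle\sigma^k\rangle\). Your Step 3 spells out the verification \(\sigma^k(gh)=\sigma^k(g)(\sigma^k)^{j}(h)\) that the paper leaves implicit, and since that check is direct, the core-freeness argument of your Step 2 is not actually needed.
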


\begin{proof}
The subgroup \(P\) acts faithfully on \(G\) (as a subgroup of \(\operatorname{Sym}(G)\)), contains the regular subgroup \(G\), and has point stabilizer \(P\cap\langle\sigma\rangle=\langle\sigma^k\rangle\). Hence this action defines a skew-morphism of \(G\), which is precisely \(\sigma^k\).
\end{proof}

\subsection{Faithful action on the Frattini quotient}\label{sec:frattini}

We now turn to the action of the \(p'\)-part of the cyclic stabilizer on the normal Sylow \(p\)-subgroup.

\begin{proposition}\label{prop:faithful-P}
Let \(X=GC\) be an exact factorization, where \(G\) is an abelian \(p\)-group and \(C\) is cyclic and core-free. Write \(|C|=kp^m\) with \((k,p)=1\), and let \(C_p\), \(C_{p'}\) denote the subgroups of \(C\) of orders \(p^m\) and \(k\), respectively. Then the conjugation action of \(C_{p'}\) on \(P\) is faithful.
\end{proposition}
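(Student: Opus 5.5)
We show that the kernel of the conjugation action of \(C_{p'}\) on \(P\) is trivial. Set
\[
K:=C_{C_{p'}}(P)=\{c\in C_{p'} : cxc^{-1}=x \text{ for all } x\in P\},
\]
the kernel of this action. It suffices to prove that \(K\) is normal in \(X\). Indeed, \(K\le C_{p'}\le C\), and \(C\) is core-free in \(X\), so normality of \(K\) forces \(K=1\), which is exactly faithfulness.

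To check normality we use the decomposition \(X=P\rtimes C_{p'}\) from part (iii) of Theorem \ref{thm:A}. Every element of \(X\) has the form \(xc'\) with \(x\in P\) and \(c'\in C_{p'}\), so it is enough to see that \(K\) is normalized by \(P\) and by \(C_{p'}\).

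\emph{Normalized by \(P\).} Each element of \(K\) centralizes \(P\) by definition. Hence conjugation by any \(x\in P\) fixes every element of \(K\) pointwise, and in particular \(xKx^{-1}=K\).

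\emph{Normalized by \(C_{p'}\).} \(C_{p'}\) is cyclic, hence abelian, so it normalizes every one of its subgroups, including \(K\).

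Together these give \(K\trianglelefteq X\). Since \(K\le C\) and \(C\) is core-free, we conclude \(K=1\).

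An alternative argument uses \(\langle P,K\rangle=P\times K\), but this is unnecessary. The only point needing care is that the normal Sylow structure of Theorem \ref{thm:A} makes \(X\) generated by \(P\) and \(C_{p'}\), which is what allows the two separate normalization checks to cover all of \(X\). I do not expect a genuine obstacle here. The stronger claim of faithfulness on \(P/\Phi(P)\) would follow afterwards from the standard coprime-action fact that a \(p'\)-automorphism acting trivially on \(P/\Phi(P)\) acts trivially on \(P\).
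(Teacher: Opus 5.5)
Your proof is correct and follows essentially the paper's argument: both use $X=PC_{p'}$ together with the commutativity of $C_{p'}$ to show that the kernel is normal (in fact central) in $X$. The only difference is the last step. The paper places the kernel in $Z(X)\le G$ via Lemma~\ref{lem:centralizer-regular} and then uses $G\cap C=1$, whereas you appeal directly to the core-freeness of $C$, which is slightly more economical.
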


\begin{proof}
Suppose \(c\in C_{p'}\) centralizes \(P\). Since \(C_{p'}\) is abelian, \(c\) also centralizes \(C_{p'}\). From the decomposition \(X=PC_{p'}\), it follows that \(c\) centralizes all of \(X\), i.e., \(c\in Z(X)\).
By Lemma \ref{lem:centralizer-regular}, \(c\in G\). But \(G\cap C_{p'}\le G\cap C=1\), so \(c=1\). Hence the action is faithful.
\end{proof}

A stronger result holds: the action remains faithful when passing to the Frattini quotient. This follows from the Burnside basis theorem.

\begin{theorem}\label{thm:frattini-faithful}
Let \(X=GC\) be an exact factorization, where \(G\) is an abelian \(p\)-group and \(C\) is cyclic and core-free. Write \(|C|=kp^m\) with \((k,p)=1\), and let \(C_p\), \(C_{p'}\) denote the subgroups of \(C\) of orders \(p^m\) and \(k\), respectively. Then the action of \(C_{p'}\) on \(P/\Phi(P)\) is faithful. Consequently, if \(d=\dim_{\mathbb F_p} P/\Phi(P)\) is the Frattini rank of \(P\), then
\[
C_{p'}\hookrightarrow \operatorname{GL}(d,p),
\]
and in particular
\[
k \mid \prod_{i=1}^d (p^i-1).
\]
\end{theorem}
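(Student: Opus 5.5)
The plan is to combine Proposition \ref{prop:faithful-P} with the classical coprime-action theorem of Burnside: if a $p'$-group $A$ acts on a $p$-group $P$ and centralizes $P/\Phi(P)$, then $A$ centralizes $P$. Here $C_{p'}$ is a $p'$-group, since $|C_{p'}|=k$ with $(k,p)=1$. It acts on the normal Sylow subgroup $P$ by conjugation, because $P\trianglelefteq X$ by Theorem \ref{thm:A}. Since $\Phi(P)$ is characteristic in $P$, this action descends to $P/\Phi(P)$.

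The key steps are as follows.
\begin{enumerate}
\item Let $K$ be the kernel of the action of $C_{p'}$ on $P/\Phi(P)$. Then $K$ is a $p'$-group acting on $P$ and trivially on $P/\Phi(P)$.
\item Show that $K$ acts trivially on $P$. I will recall the standard argument rather than quote it. Let $x\in K$ have order coprime to $p$. Then $x$ fixes every coset $g\Phi(P)$. Coprime action on the coset $g\Phi(P)$ gives a fixed point, by the usual counting or Glauberman-type argument: the orbits of $\langle x\rangle$ on a coset of size a power of $p$ have sizes dividing $|x|$, which is coprime to $p$, so some orbit has size $1$. Hence $P=C_P(x)\Phi(P)$. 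By the nongenerator property of $\Phi(P)$, this forces $C_P(x)=P$.
\item Proposition \ref{prop:faithful-P} then gives $K=1$. Thus $C_{p'}$ embeds in $\operatorname{Aut}(P/\Phi(P))$.
\item Since $P/\Phi(P)$ is elementary abelian of rank $d$, we have $\operatorname{Aut}(P/\Phi(P))\cong\operatorname{GL}(d,p)$. By Lagrange, $k$ divides $|\operatorname{GL}(d,p)|=p^{d(d-1)/2}\prod_{i=1}^d(p^i-1)$. As $(k,p)=1$, this yields $k\mid\prod_{i=1}^d(p^i-1)$.
\end{enumerate}

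The only delicate point is step 2, the coprime-action argument. One must make sure the fixed-point argument inside each coset is correctly justified: each orbit has size dividing $|x|$, while the coset has $p$-power size. Everything else is formal.
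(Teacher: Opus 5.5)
Your skeleton matches the paper's proof. You get faithfulness on $P$ from Proposition~\ref{prop:faithful-P}. You then use the Burnside fact that a $p'$-automorphism of $P$ acting trivially on $P/\Phi(P)$ is trivial; the paper phrases this as ``the kernel of $\operatorname{Aut}(P)\to\operatorname{Aut}(P/\Phi(P))$ is a $p$-group, hence meets $C_{p'}$ trivially''. Finally you pass to the order of $\operatorname{GL}(d,p)$. Steps 1, 3 and 4, and the nongenerator argument at the end of step 2, are fine.

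The gap is the fixed-point claim in step 2, which is the point you yourself flag as delicate. From ``each $\langle x\rangle$-orbit on $g\Phi(P)$ has size dividing $|x|$, which is prime to $p$, and $|g\Phi(P)|$ is a power of $p$'' it does not follow that some orbit is a singleton. Divisors of an integer prime to $p$ can add up to a power of $p$ with none of them equal to $1$. For example, with $p=5$ and $|x|=6$, counting alone cannot exclude orbits of sizes $2$ and $3$ filling a coset of size $5$. So as written, step 2 is unjustified for every $x$ whose order has two distinct prime divisors, and $C_{p'}$ may well contain such elements (e.g.\ $k=6$).

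There are three easy repairs.
\begin{enumerate}
\item Restrict the counting to prime-power orders. If $|x|=q^b$ for a single prime $q\neq p$, every nontrivial orbit has size divisible by $q$, while $|\Phi(P)|$ is not, so $x$ fixes a point of $g\Phi(P)$. Since $K$ is cyclic, it is generated by its elements of prime-power order, so it suffices to prove $C_P(x)=P$ for those $x$, which your argument then does.
\item Handle arbitrary $x$ directly. Let $\Phi(P)\langle x\rangle\cong\Phi(P)\rtimes\langle x\rangle$ act on $g\Phi(P)$, with $\Phi(P)$ acting by right multiplication by inverses and $x$ by conjugation. Since $\Phi(P)$ acts regularly, point stabilizers are complements to $\Phi(P)$. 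By Schur--Zassenhaus they are all conjugate to $\langle x\rangle$, so $\langle x\rangle$ itself fixes a point. This is the Glauberman-type argument you name but do not actually carry out.
\item Simply cite Burnside's theorem, as the paper does.
\end{enumerate}
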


\begin{proof}
By Proposition \ref{prop:faithful-P}, the natural map \(C_{p'}\to\operatorname{Aut}(P)\) is injective. By the Burnside basis theorem, the kernel of the restriction homomorphism
\[
\rho:\operatorname{Aut}(P)\longrightarrow \operatorname{Aut}(P/\Phi(P))
\]
is a \(p\)-group. Let \(K=\ker\rho\cap C_{p'}\). Then \(K\) is both a \(p\)-group and a \(p'\)-group, so \(K=1\). Therefore \(\rho|_{C_{p'}}\) is injective, meaning \(C_{p'}\) acts faithfully on \(P/\Phi(P)\).

The order formula for the general linear group is
\[
|\operatorname{GL}(d,p)|=p^{d(d-1)/2}\prod_{i=1}^d (p^i-1).
\]
Since \(C_{p'}\) is a \(p'\)-group, its order divides the \(p'\)-part of \(|\operatorname{GL}(d,p)|\), which is \(\prod_{i=1}^d (p^i-1)\).
\end{proof}

\begin{remark}
This theorem provides a strong number-theoretic restriction on the size of the \(p'\)-part of the stabilizer, depending only on the Frattini rank of \(P\). Since \(P=G\langle\sigma^k\rangle\), the Frattini rank of \(P\) is at most \(\operatorname{rank}(G)+1\), giving an explicit upper bound on \(k\).
\end{remark}

\subsection{Skew-product \(p\)-groups and proof of Theorem B}\label{sec:p-groups}

By Corollary \ref{cor:reduction}, the essential structural complexity of skew-product groups over abelian \(p\)-groups lies in the case where the stabilizer has \(p\)-power order. We now investigate this case in detail.
Throughout this section, assume \(X=G\langle\sigma\rangle\) is a skew-product \(p\)-group, i.e., \(|\sigma|=p^m\) for some \(m\ge 1\). Let
$
N=\operatorname{Core}_X(G).
$

\begin{proposition}\label{prop:core-nontrivial}
If \(X\ne 1\), then \(N\ne 1\).
\end{proposition}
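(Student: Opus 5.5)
The plan is to produce a nontrivial normal subgroup of \(X\) that lies inside \(G\), and the natural candidate is the center. Since \(X=G\langle\sigma\rangle\) is a skew-product \(p\)-group, it is a finite \(p\)-group. If \(X\ne 1\), the standard class-equation argument gives \(Z(X)\ne 1\): every conjugacy class outside the center has size a nontrivial power of \(p\), so \(|Z(X)|\) is divisible by \(p\).

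Next we transport this central subgroup into \(G\). The group \(X\) acts faithfully on \(G\), being a subgroup of \(\operatorname{Sym}(G)\). It contains \(G\), identified with \(L_G\), as a regular subgroup, and this subgroup is abelian. Lemma \ref{lem:centralizer-regular} therefore applies directly and yields \(Z(X)\le \operatorname{Core}_X(G)=N\).

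Combining the two steps, \(1\ne Z(X)\le N\), so \(N\ne 1\). This also establishes part (1) of Theorem \ref{thm:B}.

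There is one degenerate point to watch: the hypothesis \(X\ne 1\) should be read as \(G\ne 1\). Indeed, if \(G=1\) then \(\sigma\) is the identity permutation on a one-point set and \(X=1\). Conversely, \(X\ne1\) forces \(G\ne 1\), because \(\langle\sigma\rangle\) is core-free and so \(X\) cannot equal \(\langle\sigma\rangle\) unless both are trivial. Once the earlier results are granted, I do not expect any real obstacle. The only substantive input is Lemma \ref{lem:centralizer-regular}, which relies on \(G\) being abelian so that its centralizer in \(\operatorname{Sym}(G)\) is \(G\) itself.
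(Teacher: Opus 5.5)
Your proof is correct and is essentially the paper's argument: a nontrivial finite $p$-group has nontrivial center, and Lemma \ref{lem:centralizer-regular} gives $Z(X)\le \operatorname{Core}_X(G)=N$. Your side remark on the degenerate case $G=1$ is accurate but not needed.
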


\begin{proof}
A nontrivial finite \(p\)-group has nontrivial center. By Lemma \ref{lem:centralizer-regular}, \(1\ne Z(X)\le N\). Hence \(N\ne 1\).
\end{proof}

The following general equivalence holds in all skew-product groups, not just \(p\)-groups.

\begin{proposition}\label{prop:Xprime-G}
In a skew-product group \(X=G\langle\sigma\rangle\),
\[
G\trianglelefteq X \quad\Longleftrightarrow\quad X'\le G.
\]
\end{proposition}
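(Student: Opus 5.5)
Both implications are elementary and need no results beyond the definitions; I will argue directly with the subgroup $G$ of $X$ and the cyclic complement $\langle\sigma\rangle$. Nothing about $G$ being abelian or a $p$-group is needed, which fits the remark that the equivalence holds in all skew-product groups.

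For ($\Rightarrow$), assume $G\trianglelefteq X$. Then $X/G$ is a group. Since $X=G\langle\sigma\rangle$, every coset $xG$ can be written as $\sigma^iG$, so $X/G$ is generated by the image of $\sigma$. Hence $X/G$ is cyclic, in particular abelian. A normal subgroup with abelian quotient contains the commutator subgroup, because $[x,y]G=[xG,yG]=G$. This gives $X'\le G$.

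For ($\Leftarrow$), assume $X'\le G$. Take $x\in X$ and $g\in G$. Writing $xgx^{-1}=(xgx^{-1}g^{-1})\,g=[x,g]\,g$ exhibits the conjugate as a commutator times $g$. Here $[x,g]\in X'\le G$ and $g\in G$, so $xgx^{-1}\in G$. Thus $G$ is normal; this is the standard fact that any subgroup containing $X'$ is normal.

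I expect no real obstacle. The only point to state explicitly is that normality of $G$ is used in ($\Rightarrow$) to form the quotient $X/G$, which is then cyclic because it is generated by $\sigma G$. If desired, one can combine this with Proposition~\ref{prop:normal-auto}: $X'\le G$ holds exactly when $\sigma$ is an automorphism.
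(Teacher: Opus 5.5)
Your proof is correct and matches the paper's argument. In ($\Rightarrow$) both use that $X/G$ is cyclic, hence abelian. In ($\Leftarrow$) you check normality elementwise via $xgx^{-1}=[x,g]\,g$, whereas the paper notes that $G/X'$ is normal in the abelian group $X/X'$ and applies the correspondence theorem; these are the same standard fact.
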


\begin{proof}
If \(G\trianglelefteq X\), then \(X/G\) is cyclic, generated by the image of \(\sigma\). The commutator subgroup of any cyclic-by-abelian group is contained in the normal subgroup, so \(X'\le G\).

Conversely, if \(X'\le G\), then \(G/X'\) is a subgroup of the abelian group \(X/X'\). All subgroups of an abelian group are normal, so \(G/X'\trianglelefteq X/X'\). By the correspondence theorem, \(G\trianglelefteq X\).
\end{proof}

\subsection{Proof of Theorem B}

\begin{proof}\label{thm:B-proof}
(i) By Lemma \ref{lem:centralizer-regular}, \(Z(X)\le G\). Since \(Z(X)\trianglelefteq X\), it follows that \(Z(X)\le\operatorname{Core}_X(G)=N\). If \(X\ne 1\), then \(X\) is a nontrivial \(p\)-group, so \(Z(X)\ne 1\), and hence \(N\ne 1\).

(ii) If \(X\) has nilpotency class at most \(2\), then \(X'\le Z(X)\). By (i), \(Z(X)\le G\), so \(X'\le G\). Proposition \ref{prop:Xprime-G} gives \(G\trianglelefteq X\). By Proposition \ref{prop:normal-auto}, \(\sigma\) is an automorphism.

(iii) Assume \(G\not\trianglelefteq X\). Then \(N<G\). Set
\[
Y=X/N,\qquad A=G/N,\qquad B=\langle\sigma\rangle N/N.
\]
Since \(N\le G\) and \(G\cap\langle\sigma\rangle=1\), we have \(N\cap\langle\sigma\rangle=1\), so \(B\cong\langle\sigma\rangle\). Moreover, \(Y=AB\) is an exact factorization, \(A\) is abelian, \(B\) is cyclic, and
\[
\operatorname{Core}_Y(A)=\operatorname{Core}_X(G)/N=1.
\]
Thus \(A\) is core-free in \(Y\).

Since \(X\) is a \(p\)-group, so is \(Y\). If \(Y\) were abelian, then every subgroup of \(Y\) would be normal, contradicting \(\operatorname{Core}_Y(A)=1\) and \(A\ne 1\). Hence \(Y\) is nonabelian. By Itô's theorem, \(X\) is metabelian, and therefore its quotient \(Y\) is metabelian.

 Consider the action of \(Y\) on the left coset space \(Y/A\) by left multiplication. Since \(\operatorname{Core}_Y(A)=1\), this action is faithful. The subgroup \(B\) acts regularly on \(Y/A\): indeed, \(Y=AB\) implies transitivity, and \(|B|=|Y:A|\) implies regularity. Since \(B\) is cyclic and hence abelian, its centralizer in the full symmetric group on \(Y/A\) is \(B\) itself. Therefore every element of \(Z(Y)\) lies in \(B\). Thus
\[
1\ne Z(Y)\le B,
\]
where \(Z(Y)\ne 1\) because \(Y\) is a nontrivial \(p\)-group. As a subgroup of the cyclic group \(B\), \(Z(Y)\) is cyclic. Finally, \(Z(Y)\) is contained in the image of \(\langle\sigma\rangle\) in \(Y\).
\end{proof}

\begin{corollary}\label{cor:prime-order}
If \(|\sigma|=p\), then \(\sigma\) is an automorphism.
\end{corollary}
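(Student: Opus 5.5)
The quickest route is an index argument, which needs none of the heavier machinery. Since \(|\sigma|=p\), the skew-product group \(X=G\langle\sigma\rangle\) has order \(|X|=|G|\cdot|\langle\sigma\rangle|=|G|\,p\), because \(G\cap\langle\sigma\rangle=1\). As \(G\) is an abelian \(p\)-group, \(X\) is a \(p\)-group, and \(G\) has index \(p\) in it. I would then invoke the standard fact that a subgroup of index \(p\) in a finite \(p\)-group is normal. Indeed, \(X\) acts on the \(p\) cosets of \(G\), giving a homomorphism \(X\to S_p\). Its image is a \(p\)-subgroup of \(S_p\), so it has order dividing \(p\). Hence the kernel \(\operatorname{Core}_X(G)\) has index at most \(p\), and therefore equals \(G\). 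So \(G\trianglelefteq X\), and Proposition \ref{prop:normal-auto} shows that \(\sigma\) is an automorphism.

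As a cross-check I would also derive the result from Theorem B(iii), which shows how that theorem constrains small orders. Suppose \(G\not\trianglelefteq X\). Then \(Y=X/N\) is nonabelian, and \(1\ne Z(Y)\le B\), where \(B\cong\langle\sigma\rangle\) has order \(p\). This forces \(Z(Y)=B\), so \(B\) is central in \(Y\). Since \(Y=AB\) with \(A\) abelian and \(B\) central, \(Y\) is abelian. This contradicts the nonabelianness given by Theorem B(iii).

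Neither step is a real obstacle. The only point needing care is checking that \(X\) is genuinely a \(p\)-group of order \(|G|p\), which follows from exactness of the factorization, and then citing the normality of index-\(p\) subgroups correctly.
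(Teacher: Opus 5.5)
Your proposal is correct and follows the paper's proof. The paper's argument is that \(G\) has index \(p\) in the \(p\)-group \(X\), so it is normal, and Proposition \ref{prop:normal-auto} then shows \(\sigma\) is an automorphism. The only difference is that you prove the index-\(p\) normality via the coset action \(X\to S_p\), where the paper cites it as standard; your cross-check through Theorem B(iii) is also valid but not needed.
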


\begin{proof}
Since \(X\) is a \(p\)-group and \(|X:G|=|\sigma|=p\), the subgroup \(G\) has index \(p\). In any finite \(p\)-group, every subgroup of index \(p\) is normal. Hence \(G\trianglelefteq X\), and \(\sigma\) is an automorphism by Proposition \ref{prop:normal-auto}.
\end{proof}

\subsection{The quotient factorization}

Let
\[
Y=X/N,\qquad A=G/N,\qquad B=\langle\sigma\rangle N/N.
\]

\begin{proposition}\label{prop:quotient-factorization}
The quotient \(Y\) satisfies:
\begin{enumerate}
\item \(Y=AB\) is an exact factorization;
\item \(A\) is abelian, \(B\) is cyclic, and \(\operatorname{Core}_Y(A)=1\);
\item \(B\cong\langle\sigma\rangle\).
\end{enumerate}
\end{proposition}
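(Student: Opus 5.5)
The plan is to derive all three items from the fact that \(N=\operatorname{Core}_X(G)\) is a normal subgroup of \(X\) contained in \(G\), by elementary isomorphism-theorem arguments. This essentially repeats the setup already used in the proof of Theorem \ref{thm:B}(iii), without assuming \(G\not\trianglelefteq X\). Since \(N\trianglelefteq X\), the quotient \(Y=X/N\) is a group, and \(A=G/N\) and \(B=\langle\sigma\rangle N/N\) are subgroups of \(Y\). I will treat item (iii) first, because the exactness in (i) depends on it.

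For (iii), I will use \(N\le G\) together with \(G\cap\langle\sigma\rangle=1\) to get \(N\cap\langle\sigma\rangle=1\). The second isomorphism theorem then gives
\[
B=\langle\sigma\rangle N/N\cong\langle\sigma\rangle/(\langle\sigma\rangle\cap N)\cong\langle\sigma\rangle .
\]
For (i), the equality \(X=G\langle\sigma\rangle\) maps under the quotient map onto \(Y=AB\). For exactness I will compare orders:
\[
|A||B|=|G:N|\cdot|\sigma|=\frac{|G||\sigma|}{|N|}=\frac{|X|}{|N|}=|Y|.
\]
Since \(|AB|=|A||B|/|A\cap B|\), this forces \(A\cap B=1\). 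Alternatively, if \(gN=\sigma^iN\), then \(\sigma^{-i}g\in N\le G\), which gives \(\sigma^i\in G\cap\langle\sigma\rangle=1\).

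For (ii), \(A\) is abelian as a quotient of the abelian group \(G\), and \(B\) is cyclic by (iii). The core condition is the step that needs the most care. Let \(M/N\) be a normal subgroup of \(Y\) contained in \(A\). By the correspondence theorem, \(M\) is a normal subgroup of \(X\) with \(N\le M\le G\). Maximality of the core then gives \(M\le N\), so \(M/N=1\) and \(\operatorname{Core}_Y(A)=1\). Equivalently, \(\operatorname{Core}_Y(A)=\bigcap_y A^y\) corresponds to \(\bigcap_x G^x/N=N/N\).

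None of these steps is a real obstacle. The only point needing attention is the exactness of the factorization, which I handle by the order count above.
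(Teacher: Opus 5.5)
Your proposal is correct and follows essentially the same route as the paper: \(N\cap\langle\sigma\rangle=1\) gives (iii), the coset argument (\(gN=\sigma^iN\) forces \(\sigma^i\in G\cap\langle\sigma\rangle=1\)) gives \(A\cap B=1\), and the correspondence theorem gives \(\operatorname{Core}_Y(A)=\operatorname{Core}_X(G)/N=1\). Your order count \(|A||B|=|X|/|N|=|Y|\) is simply an equally valid alternative way to check exactness.
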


\begin{proof}
Since \(N\le G\) and \(G\cap\langle\sigma\rangle=1\), we have \(N\cap\langle\sigma\rangle=1\). Therefore the projection \(\langle\sigma\rangle\to B\) is an isomorphism, proving (iii).

To see that \(A\cap B=1\), suppose \(gN=\sigma^i N\) for some \(g\in G\). Then \(g^{-1}\sigma^i\in N\le G\), so \(\sigma^i\in G\). But \(G\cap\langle\sigma\rangle=1\), so \(\sigma^i=1\) and \(gN=N\). Hence \(A\cap B=1\). The factorization \(Y=AB\) follows directly from \(X=G\langle\sigma\rangle\).

Finally, \(A=G/N\) is abelian as a quotient of an abelian group, and \(B\) is cyclic as a quotient of a cyclic group. By the correspondence theorem for normal subgroups,
\[
\operatorname{Core}_{X/N}(G/N)=\operatorname{Core}_X(G)/N=N/N=1.
\]
This completes the proof.
\end{proof}

\subsection{The center of the quotient lies in the cyclic factor}

Although \(\langle\sigma\rangle\) is core-free in \(X\), its image \(B\) need not be core-free in \(Y\). In fact, the opposite is true: the center of \(Y\) is always a nontrivial subgroup of \(B\). This is a dual version of Lemma \ref{lem:centralizer-regular}, with the roles of the regular subgroup and the stabilizer exchanged.

\begin{theorem}\label{thm:center-quotient}
Suppose \(Y\ne 1\). Then
\[
1\ne Z(Y)\le B.
\]
In particular, \(Z(Y)\) is cyclic and \(\operatorname{Core}_Y(B)\ne 1\). If \(G\not\trianglelefteq X\), then \(Y\) is nonabelian.
\end{theorem}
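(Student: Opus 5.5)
The plan is to run the argument of Lemma \ref{lem:centralizer-regular} again, but with the roles of the two factors exchanged. We let \(Y\) act on the left coset space \(\Omega=Y/A\) by left multiplication. The kernel of this action is \(\operatorname{Core}_Y(A)\), which is trivial by Proposition \ref{prop:quotient-factorization}(ii), so \(Y\) embeds in \(\operatorname{Sym}(\Omega)\).

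Next we check that \(B\) acts regularly on \(\Omega\). Transitivity follows from \(Y=AB=BA\): every coset \(yA\) can be written as \(bA\) with \(b\in B\). Regularity then follows from the count \(|B|=|Y:A|=|\Omega|\), which holds because the factorization is exact.

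Since \(B\) is cyclic, it is an abelian regular subgroup of the faithful permutation group \(Y\) on \(\Omega\). Lemma \ref{lem:centralizer-regular}, applied with \(B\) in place of \(G\), gives \(Z(Y)\le B\); concretely, the centralizer of \(B\) in \(\operatorname{Sym}(\Omega)\) is its right regular image, which equals \(B\). Because \(Y\) is a quotient of the \(p\)-group \(X\), it is a \(p\)-group. If \(Y\ne 1\), then \(Z(Y)\ne 1\).

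The remaining assertions follow quickly.
\begin{itemize}
\item \(Z(Y)\) is a subgroup of the cyclic group \(B\), hence cyclic.
\item \(Z(Y)\) is normal in \(Y\) and contained in \(B\), so \(1\ne Z(Y)\le\operatorname{Core}_Y(B)\).
\item For the last claim, suppose \(G\not\trianglelefteq X\). Then \(N<G\), so \(A\ne 1\). If \(Y\) were abelian, \(A\) would be normal, forcing \(A=\operatorname{Core}_Y(A)=1\), a contradiction. Hence \(Y\) is nonabelian.
\end{itemize}

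The only step that needs care is the faithfulness of the coset action, together with the identification of \(B\) as an abelian regular subgroup. Once that is in place, the lemma applies verbatim. There is no genuine obstacle beyond verifying that the hypotheses of Lemma \ref{lem:centralizer-regular} are met in the dual setting.
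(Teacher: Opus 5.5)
Your proposal is correct and follows essentially the same route as the paper. You let \(Y\) act faithfully on \(Y/A\) (the kernel is \(\operatorname{Core}_Y(A)=1\)), use regularity of the cyclic subgroup \(B\) to get \(C_{\operatorname{Sym}(Y/A)}(B)=B\) and hence \(Z(Y)\le B\), get \(Z(Y)\ne 1\) because \(Y\) is a \(p\)-group, and deduce nonabelianness from \(A\ne 1\) being core-free. Your explicit remark that \(Y=AB=BA\), which is needed to write each coset \(yA\) as \(bA\), is a small point the paper glosses over; otherwise the two arguments coincide.
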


\begin{proof}
Consider the action of \(Y\) on the left coset space \(Y/A\) by left multiplication. Since \(\operatorname{Core}_Y(A)=1\), this action is faithful.

We claim that \(B\) acts regularly on \(Y/A\). Indeed, since \(Y=AB\), every coset \(yA\) can be written as \(bA\) for some \(b\in B\), so the action is transitive. The order of \(B\) equals \(|Y:A|\), so transitivity implies regularity.

Since \(B\) is cyclic and hence abelian, its centralizer in the full symmetric group on \(Y/A\) is \(B\) itself. Therefore
\[
C_{\operatorname{Sym}(Y/A)}(B)=B.
\]
Every element of \(Z(Y)\) centralizes \(B\), so under the faithful action on \(Y/A\), \(Z(Y)\) maps into \(C_{\operatorname{Sym}(Y/A)}(B)=B\). Hence \(Z(Y)\le B\).

Since \(Y\) is a nontrivial \(p\)-group, \(Z(Y)\ne 1\). As a subgroup of the cyclic group \(B\), \(Z(Y)\) is cyclic. Since \(Z(Y)\trianglelefteq Y\), we have \(1\ne Z(Y)\le\operatorname{Core}_Y(B)\).

Now suppose \(G\not\trianglelefteq X\). Then \(N<G\), so \(A\ne 1\). If \(Y\) were abelian, then \(A\trianglelefteq Y\), contradicting \(\operatorname{Core}_Y(A)=1\). Hence \(Y\) is nonabelian.
\end{proof}

\subsection{The case where the cyclic factor is normal}

Theorem \ref{thm:center-quotient} shows that the center of \(Y\) lies in \(B\), but it does not say that \(B\) itself is normal. We now analyze what happens under the stronger assumption that \(B\trianglelefteq Y\).

\begin{theorem}\label{thm:B-normal}
Let \(Y=AB\) be an exact factorization with \(A\) abelian, \(B\) cyclic, and \(\operatorname{Core}_Y(A)=1\). If \(B\trianglelefteq Y\), then
\[
Y=B\rtimes A,
\]
and the conjugation action of \(A\) on \(B\) is faithful.
\end{theorem}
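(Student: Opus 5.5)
The plan is to prove the two assertions in turn: first the semidirect decomposition, which is formal, and then faithfulness, which is where the hypothesis \(\operatorname{Core}_Y(A)=1\) enters.

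\emph{The decomposition.} Since \(Y=AB\) is exact, we have \(A\cap B=1\) and \(|Y|=|A||B|\). Because \(B\trianglelefteq Y\), the set \(BA\) is a subgroup. It equals \(AB=Y\), since \(AB=(AB)^{-1}=B^{-1}A^{-1}=BA\). A normal subgroup \(B\) with a complement \(A\) meeting it trivially gives \(Y=B\rtimes A\) by definition. Conjugation then yields a homomorphism \(\theta\colon A\to\operatorname{Aut}(B)\), \(a\mapsto(b\mapsto aba^{-1})\).

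\emph{Faithfulness.} I would show \(K:=\ker\theta=C_A(B)\) is normal in \(Y\); it lies in \(A\), so core-freeness forces \(K=1\). For normality, take \(y=ba'\in Y\) with \(b\in B\), \(a'\in A\), and conjugate \(k\in K\).
\begin{itemize}
\item Conjugation by \(a'\) fixes \(K\) setwise. Indeed \(A\) is abelian, so \(a'ka'^{-1}=k\).
\item Conjugation by \(b\) also fixes \(k\), because \(k\) centralizes \(B\) and so \(bkb^{-1}=k\).
\end{itemize}
Hence \(K\) commutes with both \(A\) and \(B\), so \(K\le Z(Y)\). In particular \(K\trianglelefteq Y\), and since \(K\le A\) we get \(K\le\operatorname{Core}_Y(A)=1\). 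So \(\theta\) is injective, i.e. the action of \(A\) on \(B\) is faithful.

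I expect no serious obstacle. The only point needing care is that \(K\) is central rather than merely normalized by \(A\), and this uses the abelianness of \(A\) together with \(Y=AB\). One could alternatively argue via Theorem \ref{thm:center-quotient}: \(K\le Z(Y)\le B\) and \(K\le A\), so \(K\le A\cap B=1\). That route requires \(Y\) to be a \(p\)-group, however. The direct core argument above works in general, so I would present that one.
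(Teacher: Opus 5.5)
Your proof is correct and follows essentially the same route as the paper. Both arguments show that $K=C_A(B)$ is normalized by the abelian group $A$ and centralized by $B$, so $K\trianglelefteq AB=Y$, and then $\operatorname{Core}_Y(A)=1$ forces $K=1$. Your observation that $K\le Z(Y)$ is a slightly sharper way of stating the same step.
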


\begin{proof}
Since \(B\trianglelefteq Y\) and \(A\cap B=1\), we immediately have the semidirect product decomposition \(Y=B\rtimes A\).

Let \(K=C_A(B)\) be the kernel of the action. Since \(A\) is abelian, \(K\trianglelefteq A\). Since \(B\) centralizes \(K\) by definition, \(B\) normalizes \(K\). Therefore
\[
K\trianglelefteq AB=Y.
\]
But \(K\le A\) and \(\operatorname{Core}_Y(A)=1\), so \(K=1\). Hence the action is faithful.
\end{proof}

\begin{corollary}\label{cor:B-normal-bounds}
In the notation of Proposition \ref{prop:quotient-factorization}, suppose that \(\langle\sigma\rangle N\trianglelefteq X\). Then there is an embedding
\[
G/N\hookrightarrow \operatorname{Aut}(\langle\sigma\rangle).
\]
If \(G\not\trianglelefteq X\) and \(|\sigma|=p^m\), then:
\begin{enumerate}
\item for odd \(p\), \(G/N\) is cyclic and \(1<|G:N|\mid p^{m-1}\); moreover, \(X/N\) is a nonabelian metacyclic \(p\)-group;
\item for \(p=2\) and \(m\ge 3\), \(G/N\) has rank at most \(2\), and its exponent divides \(2^{m-2}\).
\end{enumerate}
\end{corollary}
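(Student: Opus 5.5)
The plan is to reduce everything to Theorem \ref{thm:B-normal} applied to the quotient factorization $Y=AB$ of Proposition \ref{prop:quotient-factorization}, and then to read off the structure of $A=G/N$ from the well-known structure of $\operatorname{Aut}(\mathbb{Z}_{p^m})$.

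\textbf{Step 1: the embedding.} By Proposition \ref{prop:quotient-factorization}, $Y=AB$ is an exact factorization with $A$ abelian, $B$ cyclic, $\operatorname{Core}_Y(A)=1$ and $B\cong\langle\sigma\rangle$. The hypothesis $\langle\sigma\rangle N\trianglelefteq X$ says, by the correspondence theorem, that $B=\langle\sigma\rangle N/N\trianglelefteq Y$. Theorem \ref{thm:B-normal} then gives $Y=B\rtimes A$ with $A$ acting faithfully on $B$ by conjugation. This yields injections
\[
G/N=A\hookrightarrow \operatorname{Aut}(B)\cong\operatorname{Aut}(\langle\sigma\rangle),
\]
which is the first assertion.

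\textbf{Step 2: $p$-group setting.} Now assume $|\sigma|=p^m$, so that we are in the skew-product $p$-group situation of this section. Then $A$ is a $p$-group. Hence its image lies in a Sylow $p$-subgroup of $\operatorname{Aut}(\mathbb{Z}_{p^m})\cong(\mathbb{Z}/p^m\mathbb{Z})^\times$. Since $G\not\trianglelefteq X$, we have $N<G$, so $|G:N|=|A|>1$.

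\textbf{Step 3: the case $p$ odd.} For odd $p$ the group $(\mathbb{Z}/p^m\mathbb{Z})^\times$ is cyclic of order $p^{m-1}(p-1)$, so its Sylow $p$-subgroup is cyclic of order $p^{m-1}$. Hence $A$ is cyclic and $1<|G:N|\mid p^{m-1}$. Metacyclicity of $Y$ follows because $B\trianglelefteq Y$ is cyclic and $Y/B\cong A$ is cyclic. Nonabelianness comes from Theorem \ref{thm:center-quotient}, using $G\not\trianglelefteq X$; the same fact is also recorded in Theorem \ref{thm:B}(iii).

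\textbf{Step 4: the case $p=2$, $m\ge 3$.} Here I use the classical decomposition $(\mathbb{Z}/2^m\mathbb{Z})^\times\cong\mathbb{Z}_2\times\mathbb{Z}_{2^{m-2}}$, generated by $-1$ and $5$. This is a $2$-group of rank $2$ and exponent $2^{m-2}$. Every subgroup inherits rank at most $2$ and exponent dividing $2^{m-2}$, so the same holds for $A$.

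\textbf{Main obstacle.} The only point needing care is the transfer of normality from $X$ to $Y$ and of core-freeness of $A$. Both are already supplied by Proposition \ref{prop:quotient-factorization} and the correspondence theorem. Beyond that, the argument rests only on the standard number-theoretic description of the unit group $(\mathbb{Z}/p^m\mathbb{Z})^\times$, which I will quote rather than reprove.
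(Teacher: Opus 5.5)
Your proposal is correct and follows the paper's own proof essentially step by step. Both arguments pass to $Y=AB$, transfer the normality hypothesis to $B\trianglelefteq Y$, and use Theorem~\ref{thm:B-normal} to embed $A=G/N$ in $\operatorname{Aut}(B)\cong\operatorname{Aut}(\langle\sigma\rangle)$. Both then read off cyclicity, order, rank and exponent from the structure of $(\mathbb{Z}/p^m\mathbb{Z})^\times$, and take nonabelianness of $Y$ from Theorem~\ref{thm:center-quotient}.
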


\begin{proof}
The hypothesis \(\langle\sigma\rangle N\trianglelefteq X\) is equivalent to \(B\trianglelefteq Y\). By Theorem \ref{thm:B-normal}, \(A=G/N\) embeds faithfully into \(\operatorname{Aut}(B)\cong\operatorname{Aut}(\langle\sigma\rangle)\).

For odd \(p\), \(\operatorname{Aut}(C_{p^m})\) is cyclic of order \(p^{m-1}(p-1)\). Since \(G/N\) is a \(p\)-group, its order divides \(p^{m-1}\). As a subgroup of a cyclic group, \(G/N\) is cyclic. Since \(G\not\trianglelefteq X\), we have \(|G:N|>1\). The quotient \(Y=B\rtimes A\) is metacyclic as an extension of a cyclic group by a cyclic group, and nonabelian by Theorem \ref{thm:center-quotient}.

For \(p=2\) and \(m\ge 3\), \(\operatorname{Aut}(C_{2^m})\cong C_2\times C_{2^{m-2}}\), which has rank \(2\) and exponent \(2^{m-2}\). Any abelian \(2\)-group embedding into it must have rank at most \(2\) and exponent dividing \(2^{m-2}\).
\end{proof}

In a transitive permutation group, a regular cyclic subgroup is not necessarily normal. Its normalizer is its holomorph \(B\rtimes\operatorname{Aut}(B)\), but a transitive group containing \(B\) may extend far beyond this normalizer. The following example demonstrates this phenomenon even for \(p\)-groups with abelian point stabilizers.

\begin{example}\label{ex:wreath}
Let \(p=3\) and let
\[
Y=C_3\wr C_3=(\langle t_0\rangle\times\langle t_1\rangle\times\langle t_2\rangle)\rtimes\langle s\rangle
\]
be the standard wreath product, where \(s^3=1\) and \(s t_i s^{-1}=t_{i+1}\). Then \(|Y|=3^4=81\), and \(Y\) has nilpotency class \(3\).
Consider the natural action of \(Y\) on \(\Omega=\mathbb F_3\times\mathbb F_3\) defined by:
\[
t_i(k,l)=(k,l+\delta_{ik}),\qquad s(k,l)=(k+1,l),
\]
where \(i,k,l\in\mathbb F_3\). This action is faithful and transitive. The stabilizer of the point \((0,0)\) is
\[
A=\langle t_1,t_2\rangle\cong C_3\times C_3,
\]
which has order \(9\). Since the action is faithful, \(\operatorname{Core}_Y(A)=1\).

Now define
\[
b=t_0s,\qquad B=\langle b\rangle.
\]
We compute the order of \(b\):
\[
b^3=t_0s\cdot t_0s\cdot t_0s
=t_0(st_0s^{-1})(s^2t_0s^{-2})s^3
=t_0t_1t_2.
\]
The element \(t_0t_1t_2\) has order \(3\), so \(b^9=1\) and \(b^3\ne 1\). Hence \(|b|=9\) and \(B\cong C_9\).

We verify that \(Y=AB\) is an exact factorization.
 The unique subgroup of \(B\) of order \(3\) is \(\langle b^3\rangle=\langle t_0t_1t_2\rangle\). But
\[
t_0t_1t_2(0,0)=(0,1)\ne(0,0),
\]
so \(t_0t_1t_2\notin A\). Therefore no non-trivial power of \(b\) lies in \(A\), so \(A\cap B=1\).
However, \(B\) is \emph{not} normal in \(Y\). Suppose for contradiction that \(B\trianglelefteq Y\). Then by Theorem \ref{thm:B-normal}, \(A\) would embed into \(\operatorname{Aut}(B)=\operatorname{Aut}(C_9)\). But \(|\operatorname{Aut}(C_9)|=\phi(9)=6\), while \(|A|=9\), and \(9\) does not divide \(6\). This is a contradiction. Hence \(B\not\trianglelefteq Y\).
\end{example}

We now present concrete examples illustrating the main theorems.

\subsection{Coprime-order automorphism: the alternating group \(A_4\)}

Let \(G=C_2\times C_2\) be the Klein four-group, and let \(\alpha\) be an automorphism of \(G\) of order \(3\). Then
\[
X=G\rtimes\langle\alpha\rangle\cong A_4.
\]
This is a skew-product group with \(\sigma=\alpha\). Here \(p=2\), \(|\sigma|=3\) is coprime to \(p\), so by Corollary \ref{cor:coprime-auto}, \(\sigma\) is an automorphism, consistent with the construction.
Clearly, the normal Sylow \(2\)-subgroup is \(P=G\), \(X=P\rtimes\langle\sigma\rangle\) and the Frattini quotient \(P/\Phi(P)\cong C_2\times C_2\) has dimension \(d=2\),
verifying Theorem \ref{thm:frattini-faithful}.

\subsection{Order-\(p\) automorphism: the dihedral group \(D_8\)}

Let \(G=\langle a,b\rangle\cong C_2\times C_2\), and let \(\alpha\) be the involution swapping \(a\) and \(b\). Then
\[
X=G\rtimes\langle\alpha\rangle\cong D_8,
\]
the dihedral group of order \(8\). This is a skew-product \(2\)-group with \(|\sigma|=2=p\). By Corollary \ref{cor:prime-order}, \(\sigma\) must be an automorphism. Note that \(X\) has nilpotency class \(2\), so \(G\) is normal and \(\sigma\) is an automorphism, again consistent.

\subsection{The Heisenberg group: class-\(2\) automorphism}

Let \(p\) be an odd prime and let \(X\) be the Heisenberg group of order \(p^3\):
\[
X=\langle x,y,z\mid x^p=y^p=z^p=1,\ [x,y]=z,\ z\in Z(X)\rangle.
\]
Set \(G=\langle x,z\rangle\cong C_p\times C_p\) and \(C=\langle y\rangle\cong C_p\). Then \(X=GC\) is an exact factorization.
Since \(|X:G|=p\), \(G\trianglelefteq X\), so the corresponding skew-morphism is an automorphism (conjugation by \(y\)). The cyclic subgroup \(C\) is core-free: it has prime order and is not normal (\(xyx^{-1}=yz\notin C\)), so its core is trivial.

\subsection{Mixed order: combining \(p\) and \(p'\) parts}

Let \(G=V\oplus W\), where \(V\cong W\cong C_2\times C_2\). Choose \(\alpha\in\operatorname{Aut}(V)\) of order \(3\) and \(\beta\in\operatorname{Aut}(W)\) of order \(2\). Define
\[
\sigma=\alpha\oplus\beta\in\operatorname{Aut}(G).
\]
Then \(|\sigma|=6\), and \(X=G\rtimes\langle\sigma\rangle\) is a skew-product group.
Here \(k=3\), \(m=1\), \(p=2\). The normal Sylow \(2\)-subgroup is
\[
P=G\langle\sigma^3\rangle=G\rtimes\langle\beta\rangle,
\]
and \(X=P\rtimes\langle\sigma^2\rangle\).
The subgroup \(\langle\sigma^2\rangle\) has order \(3\) and acts faithfully on \(P/\Phi(P)\). The Frattini rank of \(P\) is \(3\) or \(4\), and
\[
3\mid \prod_{i=1}^d(2^i-1)
\]
holds for \(d\ge 2\), consistent with Theorem \ref{thm:frattini-faithful}.

\end{document}